\newtheorem{theorem}{theorem}[section]
\newtheorem{thm}[theorem]{Theorem}
\newtheorem{lem}[theorem]{Lemma}
\begin{document}

\title{\textbf{On the groups of periodic links}}
\author{\Large Haimiao Chen
\footnote{Email: \emph{chenhm@math.pku.edu.cn}. The author is supported by NSFC-11401014.} \\
\normalsize \em{Beijing Technology and Business University, Beijing, China}}
\date{}
\maketitle

\begin{abstract}
  It is shown that, if a link $\tilde{L}\subset S^3$ is $p^k$-periodic with $p$ prime and $k\ge 1$, and $L$ is the quotient link, then the groups of $\tilde{L}$ and $L$ can be related by 
  counting homomorphisms to any finite group $\Gamma$ whose order is not divisible by $p$. %This provides a new approach to detect the periodicity of links.
  
  \medskip
  \noindent {\bf Keywords:}  periodic link, finite group, Dijkgraaf-Witten invariant  \\
  {\bf MSC2010:} 57M05, 57M25, 57M27
\end{abstract}

\section{Introduction}

Let $n$ be a positive integer.
A link $\tilde{L}\subset S^3$ is called {\it periodic} of order $n$ if there exists an orientation preserving homeomorphism $\psi:S^3\to S^3$ such that
$\psi(\tilde{L})=\tilde{L}$, the order of $\psi$ is $n$, and the fixed set of $\psi$ is a circle disjoint from $\tilde{L}$; call $L=\tilde{L}/\psi$ the {\it quotient link}.
Concerning periodicity of links, many results have been obtained, see \cite{BL11,BP17,Ch02,Ch10,Hi81,HLN06,JN13,Po17,QAI15,Sa81}. 

We aim to provide a new approach to study periodic links, via ``Dijkgraaf-Witten invariants" which we introduce right away. 

Given a link $L\subset S^3$, its group is $\pi_1(L)=\pi_1(S^3-N(L))$, where $N(L)$ is a tubular neighborhood of $L$. Suppose the connected components of $L$ are $K_1,\ldots,K_n$. For each $i$, let $\mathfrak{m}_i,\mathfrak{l}_i$ respectively denote the meridian and longitude of $K_i$. The (untwisted) {\it Dijkgraaf-Witeen} invariant of $L$ is defined as follows: for each tuples $\mathbf{x}=(x_1,\ldots,x_n),\mathbf{h}=(h_1,\ldots,h_n)\in\Gamma^n$ with $\mathbf{h}\in{\rm Cen}(\mathbf{x})$, by which we mean $h_i\in{\rm Cen}(x_i)$ (the centralizer) for each $i$,
\begin{align*}
{\rm DW}(L)_{\mathbf{x},\mathbf{h}}=\#\{\phi\in\hom(\pi_1(L),\Gamma)\colon \phi(\mathfrak{m}_i)=x_i, \phi(\mathfrak{l}_i)=h_i, 1\le i\le n\},
\end{align*}
where $\#X$ for a set $X$ is its cardinality.
For $[\mathbf{h}]=([h_1],\ldots,[h_n])$, where each $[h_i]$ stands for the conjugacy class of $h_i$ in ${\rm Cen}(x_i)$, we put
\begin{align*}
{\rm DW}(L)_{\mathbf{x},[\mathbf{h}]}=\#\{\phi\in\hom(\pi_1(L),\Gamma)\colon \phi(\mathfrak{m}_i)=x_i, \phi(\mathfrak{l}_i)\in[h_i],1\le i\le n\}.
\end{align*}

The main result of this note is %a relation between the groups of a periodic link and its quotient link. 
\begin{thm} \label{thm:main}
Suppose $\tilde{L}\subset S^3$ is a $p^k$-periodic link with $p$ prime and $k\ge 1$, and $L$ is the quotient link. Let $n$ be the number of components of $\tilde{L}$ and $L$. Then for any finite group $\Gamma$ with $p\nmid\#\Gamma$, and any $\mathbf{x}\in\Gamma^n$, $\mathbf{h}\in{\rm Cen}(\mathbf{x})$, %we have 
$${\rm DW}(\tilde{L})_{\mathbf{x},[\mathbf{h}^{p^k}]}\equiv {\rm DW}(L)_{\mathbf{x},[\mathbf{h}]}\pmod{p}.$$
\end{thm}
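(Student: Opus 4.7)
The plan is to use orbit counting for the cyclic group $\langle\psi\rangle\cong\mathbb{Z}/p^k$ acting on $\pi_1(E_{\tilde L})$ (where $E_{\tilde L}=S^3-N(\tilde L)$), and hence on $\hom(\pi_1(E_{\tilde L}),\Gamma)$. Every orbit has size a divisor of $p^k$, so mod $p$ the count equals the number of $\psi$-invariant homomorphisms, and I will identify these with the ones counted by $\mathrm{DW}(L)_{\mathbf{x},[\mathbf{h}]}$. Placing the basepoint on the $\psi$-fixed circle $B$ makes $\psi_*$ an honest automorphism of $\pi_1(E_{\tilde L})$ of order dividing $p^k$; because $\tilde L$ and $L$ have the same number of components, $\psi$ preserves each component $K_i$ of $\tilde L$ setwise.

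By the positive solution of the Smith conjecture, $q\colon S^3\to S^3/\psi\cong S^3$ is a $p^k$-fold cyclic cover branched over an unknot $B'=q(B)$. Setting $E^\circ_{\tilde L}=E_{\tilde L}-N(B)$ and $E^\circ_L=E_L-N(B')$, the restriction $q\colon E^\circ_{\tilde L}\to E^\circ_L$ is an unbranched $p^k$-fold cyclic cover giving
\[
1\to\pi_1(E^\circ_{\tilde L})\to\pi_1(E^\circ_L)\to\mathbb{Z}/p^k\to 1,
\]
with $q_*(\tilde{\mathfrak m}_i)=\mathfrak m_i$, $q_*(\tilde{\mathfrak l}_i)=\mathfrak l_i^{p^k}$ (each $K_i\to q(K_i)$ is a $p^k$-fold cover of circles), and $\tilde\mu=\mu^{p^k}$ for $\tilde\mu,\mu$ the meridians of $B,B'$. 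Introduce the orbifold group $G=\pi_1(E^\circ_L)/\langle\langle\mu^{p^k}\rangle\rangle$, fitting in $1\to\pi_1(E_{\tilde L})\to G\to\mathbb{Z}/p^k\to 1$ with $\pi_1(E_L)=G/\langle\langle\mu\rangle\rangle$. Any $\phi\in\hom(\pi_1(E_L),\Gamma)$ pulls back along $q_*$ to a $\psi$-invariant $\tilde\phi\in\hom(\pi_1(E_{\tilde L}),\Gamma)$. Conversely, a $\psi$-invariant $\tilde\phi$ extends to $G\to\Gamma$: the only constraint is to choose $\tilde\phi(\mu)$ in the centralizer of $\tilde\phi(\pi_1(E_{\tilde L}))$ with $\tilde\phi(\mu)^{p^k}=1$, and since $p\nmid|\Gamma|$ the only $p^k$-th root of $1$ in $\Gamma$ is $1$; hence $\tilde\phi(\mu)=1$, the extension is unique, and it descends automatically to $\pi_1(E_L)$. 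Under this bijection $\tilde\phi(\tilde{\mathfrak m}_i)=\phi(\mathfrak m_i)$ and $\tilde\phi(\tilde{\mathfrak l}_i)=\phi(\mathfrak l_i)^{p^k}$; bijectivity of the $p^k$-th power map on conjugacy classes of $\mathrm{Cen}(x_i)$ (again from $p\nmid|\Gamma|$) matches $\phi(\mathfrak l_i)\in[h_i]$ with $\tilde\phi(\tilde{\mathfrak l}_i)\in[h_i^{p^k}]$ exactly.

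The main obstacle will be making the $\mathbb{Z}/p^k$-action preserve the DW set literally: since $\psi_*$ fixes meridians only up to conjugation in $\pi_1(E_{\tilde L})$, the subset where the values $\tilde\phi(\tilde{\mathfrak m}_i)=x_i$ are pinned is not obviously $\psi$-stable. The cleanest workaround is to pass to the $\Gamma$-conjugation-orbit count (sum over tuples in $[\mathbf{x}]$), which differs from the pinned version by the factors $|\Gamma|/|\mathrm{Cen}(x_i)|$ coprime to $p$, run orbit counting there, and then descend back; alternatively, a careful equivariant choice of basepoint paths to each $K_i$ makes $\psi_*$ act by a simultaneous $\Gamma$-conjugation that fixes $\mathbf{x}$ on the nose.
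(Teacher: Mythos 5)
Your strategy is genuinely different from the paper's: the paper writes $L=\beta^{\wedge}$, $\tilde{L}=(\beta^{p^k})^{\wedge}$ for a braid $\beta$, builds operators $F(\beta,c)$ on tensor products of induced representations over $\overline{\mathbb{F}_p}$, and gets the congruence from ${\rm tr}(A^p)={\rm tr}(A)^p$ plus linear independence of irreducible characters, whereas you use equivariant covering-space theory and mod-$p$ orbit counting. The parts of your argument that concern the \emph{unpinned} count are correct and standard: $\psi$ preserves each $K_i$ setwise, orbits of $\langle\psi_*\rangle$ on $\hom(\pi_1(E_{\tilde L}),\Gamma)$ have $p$-power size, a $\psi_*$-invariant $\tilde\phi$ extends uniquely to the orbifold group with $\tilde\phi(\mu)=1$ because $p\nmid\#\Gamma$, and $q_*(\tilde{\mathfrak l}_i)=\mathfrak l_i^{p^k}$ (one can check there is no meridional correction term). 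This cleanly gives $\#\hom(\pi_1(\tilde L),\Gamma)\equiv\#\hom(\pi_1(L),\Gamma)\pmod p$.

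The gap is exactly the obstacle you flag at the end, and neither proposed workaround closes it. Workaround 1: the conjugation-orbit count is $\sum_{\mathbf g}{\rm DW}_{(g_ix_ig_i^{-1})_i,\,[\cdot]}$ where the $g_i$ vary \emph{independently} over the components; these summands are equal only when all $g_i$ coincide (global conjugation of $\phi$), so the sum is \emph{not} $\prod_i[\Gamma:{\rm Cen}(x_i)]$ times the pinned count. Concretely, for the Hopf link with $\Gamma=S_3$ and $x_1=x_2=(12)$ one has ${\rm DW}_{((12),(12)),\cdot}=1$ but ${\rm DW}_{((12),(13)),\cdot}=0$, so the orbit count is $3$, not $9$; you therefore cannot ``descend back'' to an individual $\mathbf x$. (For knots, $n=1$, this workaround does work, since there is only one component to conjugate.) Workaround 2: writing $\psi_*(\tilde{\mathfrak m}_i)=w_i\tilde{\mathfrak m}_iw_i^{-1}$, the product $w_i\,\psi_*(w_i)\cdots\psi_*^{p^k-1}(w_i)$ is forced to be a nontrivial peripheral element (essentially $\tilde{\mathfrak l}_i$ times a meridian power), so no choice of basepaths makes the $w_i$ trivial; and even if all $w_i$ could be made equal to a common $w$, the corrected map $\tilde\phi\mapsto c_{\tilde\phi(w)}^{-1}\circ\tilde\phi\circ\psi_*$ preserves the pinned set but satisfies $T^{p^k}=c_{\tilde\phi(W)}^{-1}$ rather than $T^{p^k}={\rm id}$, so its order need not be a power of $p$ and the orbit-counting step collapses. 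This pinning of the meridian values is precisely what the paper's tensor-product/character machinery accomplishes, and your proposal as it stands has no substitute for it when $n\ge 2$.
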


\section{Proof of Theorem \ref{thm:main}}

Let $\mathbb{F}$ denote the algebraic closure of the field with $p$ elements. Recall that there is a {\it Frobenius automorphism} $\mathbb{F}\to\mathbb{F}$ sending $\lambda$ to $\lambda^p$.

\begin{lem} \label{lem:Frob}
For any square matrix $A$ over $\mathbb{F}$, one has ${\rm tr}(A^{p})={\rm tr}(A)^p$.
\end{lem}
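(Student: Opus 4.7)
The plan is to exploit that $\mathbb{F}$ is algebraically closed of characteristic $p$, which gives us two ingredients: every square matrix over $\mathbb{F}$ can be conjugated into upper-triangular form, and the Frobenius $\lambda\mapsto\lambda^p$ is a ring endomorphism of $\mathbb{F}$ (so in particular the ``freshman's dream'' $(a+b)^p=a^p+b^p$ holds, and by induction $(\sum_i\lambda_i)^p=\sum_i\lambda_i^p$).

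First I would observe that both sides of the desired identity are invariant under replacing $A$ by a conjugate $PAP^{-1}$: trace is a conjugation invariant, and $(PAP^{-1})^p=PA^pP^{-1}$, so $\mathrm{tr}((PAP^{-1})^p)=\mathrm{tr}(A^p)$ and $\mathrm{tr}(PAP^{-1})^p=\mathrm{tr}(A)^p$. Because $\mathbb{F}$ is algebraically closed, I may therefore assume $A$ is upper triangular with diagonal entries $\lambda_1,\ldots,\lambda_n$ (the eigenvalues of $A$ with multiplicity).

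Next, since the set of upper triangular matrices is closed under multiplication and the diagonal of a product of upper triangular matrices is the entry-wise product of diagonals, $A^p$ is upper triangular with diagonal entries $\lambda_1^p,\ldots,\lambda_n^p$. Hence
\[
\mathrm{tr}(A^p)=\sum_{i=1}^n\lambda_i^p=\Bigl(\sum_{i=1}^n\lambda_i\Bigr)^p=\mathrm{tr}(A)^p,
\]
where the middle equality is the iterated freshman's dream in characteristic $p$. This finishes the argument; there is no real obstacle, as everything reduces to the fact that Frobenius is a ring homomorphism.

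As an alternative not relying on triangularization, one can argue combinatorially: expand $\mathrm{tr}(A^p)=\sum_{(i_0,\ldots,i_{p-1})}A_{i_0i_1}A_{i_1i_2}\cdots A_{i_{p-1}i_0}$ and let $C_p$ act on index tuples by cyclic shift. Since $p$ is prime, every orbit has size $1$ or $p$; the size-$p$ orbits contribute multiples of $p$, hence vanish in $\mathbb{F}$, while the fixed (constant) tuples contribute $\sum_i A_{ii}^p=(\sum_iA_{ii})^p=\mathrm{tr}(A)^p$.
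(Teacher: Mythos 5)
Your main argument is exactly the paper's proof: conjugate $A$ to upper-triangular form, read off that $\mathrm{tr}(A^p)=\sum_i\lambda_i^p$, and apply the Frobenius/freshman's dream identity $\sum_i\lambda_i^p=(\sum_i\lambda_i)^p$ in characteristic $p$; you have simply spelled out the conjugation-invariance and the diagonal of a product in more detail. The combinatorial alternative via the cyclic $C_p$-action on index tuples is a correct bonus, but the core argument coincides with the paper's.
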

\begin{proof}
Let $\lambda_1,\ldots,\lambda_n$ be the eigenvalues of $A$ over $\mathbb{F}$. Since $A$ can be conjugated to an upper-triangular matrix, one has
$${\rm tr}(A^{p})=\lambda_1^{p}+\cdots+\lambda_n^p=(\lambda_1+\cdots+\lambda_n)^{p}={\rm tr}(A)^{p}.$$
\end{proof}

Take a system $S$ of representatives of conjugacy classes of $\Gamma$, and for each $x\in S$, take a system $\mathcal{R}_x$ of representatives of 
isomorphism classes of irreducible representations of ${\rm Cen}(x)$ over $\mathbb{F}$. Let
$$\mathcal{S}=\{(x,\rho)\colon x\in S,\rho\in\mathcal{R}_x\}.$$
Given $r=(x,\rho)\in\Lambda$, denote $x$ by $x_r$ and $\rho$ by $\rho_r$. Let $\rho_r^{\vee}:\Gamma\to{\rm GL}(U(x,\rho))$ denote the representation of $\Gamma$ induced by $\rho$.
For each $h\in\Gamma$, let
$$\chi^\vee_r(h)={\rm Tr}(\rho^\vee_r(h))=\begin{cases} \#[x]\cdot\chi_\rho(h), &h\in{\rm Cen}(x), \\ 0,&h\notin{\rm Cen}(x). \end{cases}$$

Let $B_m$ be the braid group on $m$ strands. We view a braid $\beta\in B_m$ geometrically as follows: Let $C$ be a solid cylinder, with $m$ numbered points on the bottom face and another $m$ numbered points on the top face and connecting $m$ points on the bottom face and those on the top face, then $\beta$ is a tangle connecting the two sets of points; let $\partial^+_i\beta$ (resp. $\partial^-_i\beta$) denote the $i$-th end of $\beta$ on the top (resp. bottom) of $C$. Adopt the convention that $\beta'\beta$ stands for the braid obtained by sticking $\partial^-_i\beta'$ with $\partial^+_i\beta$.

\begin{figure} [h]
  \centering
  % Requires \usepackage{graphicx}
  \includegraphics[width=0.3\textwidth]{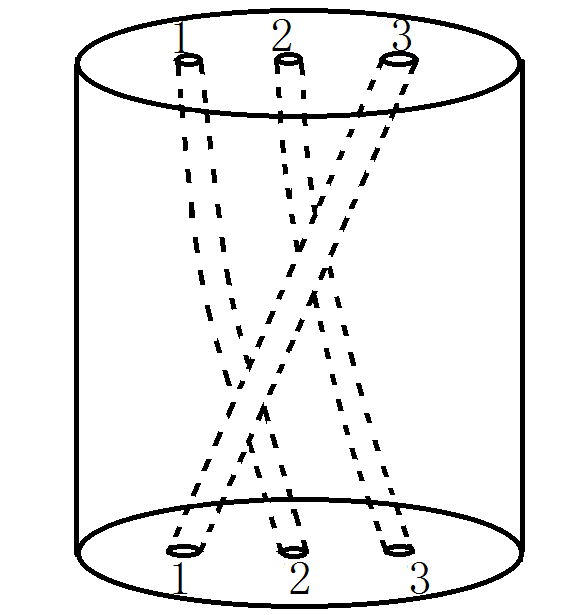}\\
  \caption{The complement of a braid in a solid cylinder}\label{fig:braid}
\end{figure}

For $\beta\in B_m$, we also introduce the following:
\begin{itemize}
  \item let $\sigma^\beta$ denote the permutation on $\{1,\ldots,m\}$ declaring that the $\partial^-_i\beta$ is connected to $\partial^+_{\sigma^{\beta}(i)}\beta$ by a strand $\beta(i)$;
  \item for $V_1,\ldots,V_m$, let
        $P_\beta:V_1\otimes\cdots\otimes V_m\to V_{\sigma^\beta(1)}\otimes\cdots\otimes V_{\sigma^\beta(m)}$
        denote the map sending $v_1\otimes\cdots\otimes v_m$ to $v_{\sigma^{\beta}(1)}\otimes\cdots\otimes v_{\sigma^\beta(m)}$;
  \item let $\beta^\wedge$ denote the {\it closure} of $\beta$ which is a link (in $S^3$);
  \item let $C(\beta)$ denote the complement of a tubular neighborhood of $\beta$ in the cylinder containing it, as illustrate in Figure \ref{fig:braid}.
  \item given $\mathbf{x}=(x_1,\ldots,x_m), \mathbf{a}=(a_1,\ldots,a_m)$, let
        $\mathcal{N}_\beta(\mathbf{x},\mathbf{a})$ denote the number of homomorphisms $\phi:\pi_1(C(\beta))\to\Gamma$ which take $x_i$ on the circle around $\partial^-_i\beta$ and take $a_i$ on the loop containing $\beta(i)$.
\end{itemize}

Given a map $c:\{1,\ldots,,m\}\to\mathcal{S}$, let $\mathbf{x}_c=(x_{c(1)},\ldots,x_{c(m)})$. 
For any $\mathbf{a}=(a_1,\ldots,a_m)\in\Gamma^m$, define
\begin{align}
H(\beta,c;\mathbf{a}):U(c(1))\otimes\cdots\otimes U(c(m))&\to U(c(1))\otimes\cdots\otimes U(c(m)), \\
u_1\otimes\cdots\otimes u_m&\mapsto P_\beta\left(\bigotimes\limits_{i=1}^m\rho^\vee_{c(i)}(a_i)(u_i)\right).
\end{align}
Put
\begin{align}
F(\beta,c)=\frac{1}{(\#\Gamma)^m}\sum\limits_{\mathbf{a}\in\Gamma^m}\mathcal{N}_\beta(\mathbf{x}_c,\mathbf{a})H(\beta,c;\mathbf{a}).
\end{align}

\begin{lem}  \label{lem:composition}
If $c'(\sigma^\beta(i))=c(i)$ for all $i\in\{1,\ldots,m\}$, then
$$F(\beta',c')F(\beta,c)=F(\beta'\beta,c).$$
\end{lem}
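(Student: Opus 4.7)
The plan is to split both sides along the middle of the braid using Seifert--van Kampen, and to combine the two halves by means of a commutation identity for $P_\beta$ with diagonal tensor operators. Geometrically, $C(\beta'\beta)$ is obtained from $C(\beta)$ and $C(\beta')$ by gluing along a disk $D$ with $m$ marked points removed. Since $\pi_1(D\setminus\{m\text{ pts}\})$ is the free group $F_m$ on the $m$ middle meridians, van Kampen realizes $\pi_1(C(\beta'\beta))$ as the pushout of $\pi_1(C(\beta))$ and $\pi_1(C(\beta'))$ over $F_m$, so that a homomorphism $\phi:\pi_1(C(\beta'\beta))\to\Gamma$ is equivalent to a pair $(\phi_1,\phi_2)$ of homomorphisms on the two pieces that agree on the $m$ middle meridians. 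The hypothesis $c'(\sigma^\beta(i))=c(i)$ says exactly that the strand starting at bottom position $i$ of $\beta$, which ends at top position $\sigma^\beta(i)$, is assigned the same representation data on both sides of the middle disk.

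On the operator side, the main tool is the commutation identity
\[
P_\beta\circ(A_1\otimes\cdots\otimes A_m)=(A_{\sigma^\beta(1)}\otimes\cdots\otimes A_{\sigma^\beta(m)})\circ P_\beta
\]
for any collection of operators $A_i$ on the $i$-th tensor factor. Applying this to $H(\beta',c';\mathbf{a}')\circ H(\beta,c;\mathbf{a})$, I move the diagonal representation operators past the permutations so that they combine factor by factor, while the two permutations collapse to $P_{\beta'}P_\beta=P_{\beta'\beta}$ after the routine convention check against the geometric stacking $\beta'\beta$. The hypothesis $c'(\sigma^\beta(i))=c(i)$ is then precisely what guarantees that, after $P_\beta$, the operator $\rho^\vee_{c'(\sigma^\beta(i))}(a'_{\sigma^\beta(i)})$ coming from $H(\beta',c';\mathbf{a}')$ acts on the same irreducible $U(c(i))$ as $\rho^\vee_{c(i)}(a_i)$ coming from $H(\beta,c;\mathbf{a})$, so that their product is well defined as $\rho^\vee_{c(i)}(a'_{\sigma^\beta(i)}a_i)$.

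To finish, I combine the two sides with the change of variable $a''_i:=a'_{\sigma^\beta(i)}a_i$ and use the gluing identity
\[
\sum_{\mathbf{a},\mathbf{a}'}\mathcal{N}_{\beta'}(\mathbf{x}_{c'},\mathbf{a}')\,\mathcal{N}_\beta(\mathbf{x}_c,\mathbf{a})=(\#\Gamma)^m\sum_{\mathbf{a}''}\mathcal{N}_{\beta'\beta}(\mathbf{x}_c,\mathbf{a}''),
\]
in which the factor $(\#\Gamma)^m$ accounts for the free choice of middle-meridian values at the gluing. Under the same change of variable the operator part becomes $H(\beta'\beta,c;\mathbf{a}'')$, and the $(\#\Gamma)^m$ absorbs the mismatch between the $(\#\Gamma)^{-2m}$ coming from the product of the two normalizations and the $(\#\Gamma)^{-m}$ in $F(\beta'\beta,c)$. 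The main obstacle is bookkeeping: one has to keep straight the direction of $\sigma^\beta$, the order in which $\beta'\beta$ is read, and the precise interpretation of the loop ``containing $\beta(i)$,'' so that the permutations, tensor factor alignments, and strand-loop products line up correctly. Once the conventions are pinned down, the rest is a direct combination of the van Kampen decomposition with the commutation identity for $P_\beta$.
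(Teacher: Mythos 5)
Your argument is correct and is exactly the content behind the paper's proof, which consists of the single sentence ``this is an immediate consequence of von-Kampen Theorem'': the intended route is precisely your combination of the van Kampen decomposition of $C(\beta'\beta)$ along the punctured middle disk with the commutation identity for $P_\beta$, giving $H(\beta',c';\mathbf{a}')H(\beta,c;\mathbf{a})=H(\beta'\beta,c;\mathbf{a}'')$ for $a''_i=a'_{\sigma^\beta(i)}a_i$ and then matching normalizations. The only point to tighten is that your displayed gluing identity must be read fiberwise over $\mathbf{a}''$ --- that is, with the left-hand sum restricted to pairs $(\mathbf{a},\mathbf{a}')$ satisfying $a'_{\sigma^\beta(i)}a_i=a''_i$ for all $i$ --- since the unconstrained version is too weak to match the coefficients of $H(\beta'\beta,c;\mathbf{a}'')$ term by term.
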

\begin{proof}
This is an immediate consequence of von-Kampen Theorem.
\end{proof}

\begin{lem} \label{lem:trace}
Write $\sigma=\sigma^\beta$ as the product of disjoint cycles
$$(i(1,1),\ldots,i(1,e_1))\cdots(i(n,1),\ldots,i(n,e_n)).$$
Suppose $c(i)=c(\sigma(i))$ for each $i$. Let $\mathbf{x}=(x_{i(1,1)},\ldots,x_{i(n,1)})$. 
Then
$${\rm Tr}(F(\beta,c))=\frac{1}{(\#\Gamma)^n}\sum\limits_{\mathbf{h}\in{\rm Cen}(\mathbf{x})}{\rm DW}(\beta^{\wedge})_{\mathbf{x},\mathbf{h}}\cdot\prod\limits_{i=1}^n\rho^\vee_{c(i)}(h_i).$$
\end{lem}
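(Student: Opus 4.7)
The plan is to compute ${\rm Tr}(F(\beta,c))$ by unwinding the definition, factoring the trace of each $H(\beta,c;\mathbf{a})$ along the cycles of $\sigma^\beta$, and regrouping the sum over $\mathbf{a}\in\Gamma^m$ into the Dijkgraaf--Witten count for $\beta^\wedge$ via van Kampen's theorem. First, by linearity of trace and the definition of $F$,
$${\rm Tr}(F(\beta,c))=\frac{1}{(\#\Gamma)^m}\sum_{\mathbf{a}\in\Gamma^m}\mathcal{N}_\beta(\mathbf{x}_c,\mathbf{a})\cdot{\rm Tr}(H(\beta,c;\mathbf{a})).$$

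Next I would compute ${\rm Tr}(H(\beta,c;\mathbf{a}))$: since $H$ is the composition of the tensor-product operator $\bigotimes_{i=1}^m\rho^\vee_{c(i)}(a_i)$ with the coordinate permutation $P_\beta$, a direct matrix-entry computation shows that the trace of such a ``decorated permutation'' factors multiplicatively over the disjoint cycles of $\sigma^\beta$. The hypothesis $c(i)=c(\sigma(i))$ forces the tensor factors along a cycle $C_j=(i(j,1),\ldots,i(j,e_j))$ to coincide with a single space $U(c(i(j,1)))$, and a standard matrix identity gives the contribution of $C_j$ as $\chi^\vee_{c(i(j,1))}(a_{i(j,1)}a_{i(j,2)}\cdots a_{i(j,e_j)})$, the order of multiplication being well-defined up to cyclic rotation by cyclicity of trace. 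Hence
$${\rm Tr}(H(\beta,c;\mathbf{a}))=\prod_{j=1}^n\chi^\vee_{c(i(j,1))}(a_{i(j,1)}a_{i(j,2)}\cdots a_{i(j,e_j)}).$$

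For the geometric step, van Kampen applied to the closure operation --- which glues the top and bottom disks of the cylinder --- shows that the concatenation of the $a$-loops $\beta(i(j,1)),\ldots,\beta(i(j,e_j))$ represents the longitude $\mathfrak{l}_j$ of the $j$-th component of $\beta^\wedge$. Grouping the outer sum over $\mathbf{a}$ by the tuple $\mathbf{h}=(h_1,\ldots,h_n)$ with $h_j=a_{i(j,1)}\cdots a_{i(j,e_j)}$, the internal freedom along each cycle contributes $(\#\Gamma)^{e_j-1}$ once $h_j$ is fixed, and the remaining sum counts extensions to homomorphisms $\pi_1(\beta^\wedge)\to\Gamma$ with meridians $\mathbf{x}$ and longitudes $\mathbf{h}$ --- namely ${\rm DW}(\beta^\wedge)_{\mathbf{x},\mathbf{h}}$. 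The condition $\mathbf{h}\in{\rm Cen}(\mathbf{x})$ is automatic from the torus relations $[\mathfrak{m}_j,\mathfrak{l}_j]=1$ (equivalently from $\chi^\vee_r$ vanishing off ${\rm Cen}(x_r)$), and the accumulated cycle factor $(\#\Gamma)^{m-n}$ precisely converts $(\#\Gamma)^{-m}$ into $(\#\Gamma)^{-n}$. The hard part is this last geometric identification: confirming that the cyclic product of the $a_i$'s along a cycle literally represents the longitude $\mathfrak{l}_j$ in the correct order (up to cyclicity of trace) and orientation, and verifying the $(\#\Gamma)^{m-n}$-to-$1$ correspondence between compatible $\mathbf{a}$-tuples on $C(\beta)$ and homomorphisms $\pi_1(\beta^\wedge)\to\Gamma$ with prescribed meridians and longitudes. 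This demands careful bookkeeping of conventions for the loops $a_i$, the meridians $x_i$, orientations, and the gluing in the closure; once these are pinned down, the two remaining pieces (trace factorization and longitude identification) are routine.
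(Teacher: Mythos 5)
Your proposal follows essentially the same route as the paper: first the trace of $H(\beta,c;\mathbf{a})$ is factored over the disjoint cycles of $\sigma^\beta$ into traces of $\rho^\vee_{c(\cdot)}$ evaluated at the cyclic products of the $a_i$ (which the paper carries out via an explicit matrix-entry computation, yielding the product in the order $a_{i(t,e_t)}\cdots a_{i(t,1)}$ --- note this is the reversal, not a cyclic rotation, of your order, so the convention does need to be fixed by the index contraction rather than by cyclicity of trace alone), and then the sum over $\mathbf{a}$ is regrouped by the cycle products $\mathbf{h}$, with the $(\#\Gamma)^{m-n}$ internal freedom converting $(\#\Gamma)^{-m}$ into $(\#\Gamma)^{-n}$ and van Kampen identifying the residual count with ${\rm DW}(\beta^\wedge)_{\mathbf{x},\mathbf{h}}$. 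You are in fact more forthcoming than the paper about the geometric regrouping step (the paper asserts it in one line), and your computation correctly lands on $\chi^\vee_{c(i)}(h_i)$ rather than the $\rho^\vee_{c(i)}(h_i)$ appearing in the lemma's displayed formula, which is evidently a typo in the statement.
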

\begin{proof}
Let $\{u_i^j\colon 1\le j\le d_i\}$ be a basis of $U(c(i))$. Write
$$\rho^\vee_{c(i)}(a_i)(u_i^j)=\sum_{\ell=1}^{d_i}\lambda(i)_{\ell,j}u_i^\ell.$$
Then
$$P_\beta\left(\bigotimes\limits_{i=1}^m\rho^\vee_{c(i)}(a_i)(u_i)\right)=\sum\limits_{j_1,\ldots,j_m}\bigotimes\limits_{i=1}^mu_i^{j_i}\prod_{i=1}^m\lambda(i)_{j_{\sigma(i)},j_i}.$$ 
Hence
\begin{align*}
{\rm Tr}(H(\beta,c;\mathbf{a})))
&=\sum\limits_{j_1,\ldots,j_m}\prod\limits_{i=1}^m\lambda(i)_{j_{\sigma(i)},j_i}=\sum\limits_{j_1,\ldots,j_m}\prod\limits_{t=1}^n\prod\limits_{\ell=1}^{e_t}\lambda(i(t,\ell))_{j_{i(t,\ell+1)},j_{i(t,\ell)}} \\
&=\prod\limits_{t=1}^n{\rm Tr}(\lambda(i(t,e_t))\cdots\lambda(i(t,1)))=\prod\limits_{t=1}^n{\rm Tr}(\rho^\vee_{c(t)}(a(t)),
\end{align*}
where $\lambda(i)$ is the matrix whose $(\ell,j)$ entry is $\lambda(i)_{\ell,j}$, and $a(t)$ is the abbreviation for $a_{i(t,e_t)}\cdots a_{i(t,1)}$. 
We have
\begin{align}
{\rm Tr}(F(\beta,c))&=\frac{1}{(\#\Gamma)^m}\sum\limits_{\mathbf{a}\in\Gamma^m}\mathcal{N}_\beta(\mathbf{x}_c,\mathbf{a})\prod\limits_{t=1}^n{\rm Tr}(\rho^\vee_{c(t)}(a(t))) \\
&=\frac{1}{(\#\Gamma)^m}\sum\limits_{\mathbf{h}\in{\rm Cen}(\mathbf{x})}\sum\limits_{\mathbf{a}\in\Gamma^m\colon\atop a(t)=h_t,1\le t\le n}\mathcal{N}_\beta(\mathbf{x}_c,\mathbf{a})\prod\limits_{t=1}^n\chi^\vee_{c(i)}(h_i) \\
&=\frac{1}{(\#\Gamma)^n}\sum\limits_{\mathbf{h}\in{\rm Cen}(\mathbf{x})}{\rm DW}(\beta^{\wedge})_{\mathbf{x},\mathbf{h}}\cdot\prod\limits_{i=1}^n\chi^\vee_{c(i)}(h_i).
\end{align}
%where the last equality follows from that $\Gamma^{m-n}$ acts freely on , and the orbit set can be identified with .
\end{proof}

Take $\beta\in B_m$ such that $\tilde{L}=(\beta^{p^k})^\wedge$ and $L=\beta^{\wedge}$. By Lemma \ref{lem:composition}, $F(\beta^{p^k},\tilde{c})=F(\beta,c)^{p^k}$, and then by Lemma \ref{lem:Frob}, Lemma \ref{lem:trace},
\begin{align}
\sum\limits_{\mathbf{h}\in{\rm Cen}(\mathbf{x})}{\rm DW}(\tilde{L})_{\mathbf{x},\mathbf{h}}\cdot\prod\limits_{i=1}^n\chi^\vee_{c(i)}(h_i)
&=\sum\limits_{\mathbf{h}\in{\rm Cen}(\mathbf{x})}{\rm DW}(L)_{\mathbf{x},\mathbf{h}}\cdot\prod\limits_{i=1}^n(\chi^\vee_{c(i)}(h_i))^{p^k} \\
&=\sum\limits_{\mathbf{h}\in{\rm Cen}(\mathbf{x})}{\rm DW}(L)_{\mathbf{x},\mathbf{h}}\cdot\prod\limits_{i=1}^n\chi^\vee_{c(i)}(h_i^{p^k}).
\end{align}
Replacing $\mathbf{h}$ in the left-hand-side by $\mathbf{h}^{p^k}$, we can re-write the equation as
$$\sum\limits_{\mathbf{h}\in{\rm Cen}(\mathbf{x})}{\rm DW}(\tilde{L})_{\mathbf{x},\mathbf{h}^{p^k}}\cdot\prod\limits_{i=1}^n\chi^\vee_{c(i)}(h_i^{p^k})
=\sum\limits_{\mathbf{h}\in{\rm Cen}(\mathbf{x})}{\rm DW}(L)_{\mathbf{x},\mathbf{h}}\cdot\prod\limits_{i=1}^n\chi^\vee_{c(i)}(h_i^{p^k}).$$
By Corollary 5.3.5 of \cite{Be91}, for any group $\Xi$ with $p\nmid\#\Xi$, the number of irreducible characters equals that of conjugacy classes. By Corollary 2 and Corollary 3 on Page 14 of \cite{Se77}, 
any two irreducible characters $\chi,\chi'$ satisfy
$$\frac{1}{\#\Xi}\sum\limits_{g\in\Xi}\chi(g)\chi'(g^{-1})=\delta_{\chi,\chi'},$$
which implies that the irreducible characters of $\Xi$ are linear independent. Applying this to $\Xi={\rm Cen}(x_i)$, we see that for each conjugacy class $[h_i]$, the characteristic function $\delta_{[\mathbf{h}]}$ is a linear combination of the $\chi_\rho$'s for $\rho\in\mathcal{R}_x$. Then the proof is complete.

\end{document}